\newtheorem{thm}{Theorem}
\newtheorem*{prop*}{Proposition}
\newtheorem*{lem*}{Lemma}
\newtheorem*{rem*}{Remark}
\begin{document}
\title{Equivalent Bergman Spaces With Inequivalent Weights}
\author{Blake J. Boudreaux}
\date{\today}
\address{Department of Mathematics, Texas A\&M University, College Station TX 77843-3368}
\email{bboudreaux28@math.tamu.edu}
\subjclass[2010]{Primary 32A25; Secondary 32A36}

\begin{abstract}
We give a proof that every space of weighted square-integrable
holomorphic functions admits an equivalent weight whose Bergman
kernel has zeroes. Here the weights are equivalent in the sense that they determine the same space of holomorphic functions. Additionally, a family of radial weights in $L^1(\mathbb{C})$ whose associated Bergman kernels have infinitely many zeroes is exhibited.
\end{abstract}

\maketitle

\section{Introduction}
Since the paper of L. Qi-Keng \cite{Qi-Keng1966}, there has been interest in constructing domains in $\mathbb{C}^N$ whose Bergman kernel has zeroes. After the observation that weighted Bergman kernels correspond to unweighted Bergman kernels in higher dimension, it is natural to consider the same questions involving weighted Bergman kernels.

A positive measurable function $\mu$ defined on a domain $D\subset\mathbb{C}^N$ is called a \textit{weight}. (We merely require a weight to be measurable; some authors require weights to additionally be integrable.) To every weight $\mu$ on $D$ corresponds a Hilbert space $L^2(D,\mu)$ of measurable functions determined by the inner product
\[
\left\langle f,g\right\rangle_{\mu} :=\int_{D}f(\zeta)\overline{g(\zeta)}\mu(\zeta)dA(\zeta).
\]
Let $L^2_H(D,\mu)$ denote the subspace of $L^2(D,\mu)$ consisting of those functions that are also holomorphic.

We are interested in weights that determine a space on which a weighted Bergman kernel can be defined. We call a weight $\mu$ \textit{admissible} if for each $z\in D$ the evaluation functional $E_z:f\mapsto f(z)$ is continuous on $L^2_H(D,\mu)$, and if $L^2_H(D,\mu)$ is a closed subspace of $L^2(D,\mu)$. In the case that $\mu$ is admissible, for each $z\in D$ the Riesz representation theorem provides a unique $B_z^{D,\mu}(\,\cdot\,)\in L^2_H(D,\mu)$ such that
\[
f(z)=\left\langle f,B_z^{D,\mu}\right\rangle_{\mu}.
\]
It is common to write $K_{D,\mu}(z,\zeta)=B^{D,\mu}_z(\zeta)$ and view it as a function on $D\times D$.  We call $K_{D,\mu}$ the weighted Bergman kernel of $D$ (with respect to the weight $\mu$). We typically write $K_{\mu}$ in place of $K_{D,\mu}$ if the domain is clear from context. As in the unweighted case \cite{Krantz2001,Range1986}, the kernel $K_{\mu}(z,\zeta)$ possesses the following properties:
\begin{enumerate}
\item[(i)] $K_{\mu}(z,\zeta)$ is holomorphic in $z$ and conjugate-holomorphic in $\zeta$,
\item[(ii)] $K_{\mu}(z,\zeta)=\sum_{j}\varphi_{j}(z)\overline{\varphi_{j}(\zeta)}$ for any complete orthonormal system $\{\varphi_j\}$ of $L^2_H(D,\mu)$, with convergence uniform on compact sets of $D\times D$,
\item[(iii)] and $\overline{K_{\mu}(z,\zeta)}=K_{\mu}(\zeta,z)$.
\end{enumerate}
A comprehensive reference on the theory of admissible weights is the paper, ``On the Dependence of the Reproducing Kernel on the Weight of Integration''\cite{PW1990}.

Following the convention of A. Per\"al\"a \cite{Perälä2017}, we say that two admissible weights $\mu_1$ and $\mu_2$ are \textit{equivalent}, or $\mu_1\sim\mu_2$, if $L^2_H(D,\mu_1)=L^2_H(D,\mu_2)$ as sets. For example, if $g$ is a positive measurable function on $D$ with the property that $\text{ess inf}_{z\in D}g(z)>0$ and $\text{ess sup}_{z\in D}g(z)<\infty$, then $g\cdot \mu\sim\mu$ for any weight $\mu$ on $D$.

The purpose of this note is to answer two questions \cite{Perälä2017}. The firsts asks if every space $L^2_H(D,\mu)$ can be equipped with an equivalent weight $\mu^*$ so that the Bergman kernel $K_{D,\mu^*}$ has zeroes. The second asks if there exists a radial weight $W$ on $\mathbb{C}$ such that the kernel $K_{\mathbb{C},W}(\,\cdot\, ,z)$ has infinitely many zeros for a fixed $z\in D$. We answer both questions in the affirmative.

The plan of attack to answering the first question above is best illustrated by considering the case when $D\subset\mathbb{C}^N$ is a bounded set containing zero and $\mu$ is continuous. In this situation, the weight $\nu(z):=\mu(z)/\|z\|^{2N}$ is not locally integrable at zero. Indeed, by continuity $\mu$ is uniformly bounded away from zero in a sufficiently small neighborhood of $z=0$. It follows that every member of $L^2_H(D,\nu)$\textemdash in particular $K_{\nu}(\,\cdot\,,\zeta)$ for each $\zeta\in D$\textemdash vanishes at zero. Moreover, for each $n\in\mathbb{N}$ the weight $\nu_n(z):=\min(n,\|z\|^{-2N})\cdot\mu(z)$ is equivalent to $\mu(z)$: each $\nu_n$ is simply the product of $\mu$ and a bounded function that is uniformly bounded away from zero (recall we are assuming $D$ is bounded). Since $\nu_n$ increases to $\nu$ as $n\to\infty$, we may apply a weighted generalization of the Ramadanov theorem \cite{PWW2016} to see that $K_{\nu_n}\to K_{\nu}$ uniformly on compact subsets of $D\times D$. Regarding $\zeta\in D$ as fixed, the function $K_{\nu}(\,\cdot\,,\zeta)$ vanishes at zero, so a variant of Hurwitz's theorem shows that for large $n$ we have $K_{\nu_n}(0,\zeta)=0$, completing the proof. The general case requires a more delicate approach, as a general measurable function may have extremely pathlogical behavior near every point in its domain, but the main idea remains the same.

The approach to answering the second question involves carefully choosing the weight $W$ so that for each fixed nonzero $w$ in the plane the kernel $K_{\mathbb{C},W}(\,\cdot\,,w)$ is an entire function of finite but non-integer order, which by a consequence of the Hadamard factorization theorem has infinitely many zeros. As a special case of our construction, we exhibit a weight with kernel $\cos(i\sqrt{z\bar{w}})$; this is a function satisfying the necessary criteria by elementary means.

I would like to thank my advisor, Dr. Harold Boas, for bringing these questions to my attention, as well as providing direction on how one might approach them. I would also like to thank the referee, whose insightful comments transformed the third section of this note from a single example to a large family.

\section{The Bergman Kernels of Equivalent Weights}
Let us first show that our definition of an admissible weight is consistent with another standard definition \cite{PW1990}.
\begin{prop*}
Let $\mu$ be a weight on a domain $D\subset\mathbb{C}^N$. Then $\mu$ is admissible if and only if the norm of the point evaluation functional $E_z:f\mapsto f(z)$ is locally bounded (if thought of as a function on $D$).
\end{prop*}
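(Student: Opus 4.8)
The plan is to prove the two implications separately, after recording the standard observation that local boundedness of $z\mapsto\|E_z\|$ on $D$ is equivalent to boundedness of this function on every compact subset of $D$ (cover each point's bounding neighborhood, extract a finite subcover, take the maximum of the constants).

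First I would assume $\mu$ is admissible and deduce local boundedness. Admissibility makes $L^2_H(D,\mu)$ a closed subspace of the complete space $L^2(D,\mu)$, hence a Hilbert space in its own right, and it makes each $E_z$ a bounded functional. Fix a compact set $K\subset D$. For each fixed $f\in L^2_H(D,\mu)$ the function $z\mapsto|E_z(f)|=|f(z)|$ is continuous, hence bounded on $K$, so the family $\{E_z:z\in K\}$ is pointwise bounded. The uniform boundedness principle then yields $\sup_{z\in K}\|E_z\|<\infty$, and since $K$ was arbitrary, $z\mapsto\|E_z\|$ is locally bounded. I expect this direction to be routine: its only real content is that pointwise boundedness of the evaluation family comes for free from the continuity of holomorphic functions, which is exactly what lets Banach--Steinhaus apply.

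For the converse, assume $z\mapsto\|E_z\|$ is locally bounded. Boundedness of each individual $E_z$ is then immediate, so it remains to show that $L^2_H(D,\mu)$ is closed. Let $f_n\in L^2_H(D,\mu)$ converge in $L^2(D,\mu)$ to some $f\in L^2(D,\mu)$; in particular $\{f_n\}$ is Cauchy. Given a compact $K\subset D$, local boundedness furnishes a constant $C_K$ with $\|E_z\|\le C_K$ for all $z\in K$, whence
\[
\sup_{z\in K}|f_n(z)-f_m(z)|\le C_K\|f_n-f_m\|_{\mu}.
\]
Thus $\{f_n\}$ is uniformly Cauchy on compact subsets of $D$ and converges locally uniformly to a holomorphic function $g$. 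Extracting a subsequence that converges $\mu$-almost everywhere to $f$, I would conclude that $f=g$ almost everywhere, so $f\in L^2_H(D,\mu)$ and the subspace is closed.

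The main obstacle, such as it is, lies in this last step: one must reconcile the two modes of convergence, namely $L^2(D,\mu)$ convergence to $f$ and locally uniform convergence to $g$. I would handle this by passing to an a.e.-convergent subsequence and using that $\mu>0$, so that the $\mu$-null sets coincide with the Lebesgue-null sets and the two limits may be identified almost everywhere. Everything else is bookkeeping with the definition of local boundedness together with the standard fact that a locally uniform limit of holomorphic functions is holomorphic.
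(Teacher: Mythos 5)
Your proof is correct. The forward implication is essentially the paper's argument: both apply the uniform boundedness principle to the family of evaluation functionals over a relatively compact neighborhood, with pointwise boundedness supplied by the continuity of holomorphic functions and completeness of $L^2_H(D,\mu)$ supplied by the closedness clause in the definition of admissibility. For the converse the paper simply cites Proposition 2.1 of \cite{PW1990}, whereas you supply the standard proof in full: the estimate $\sup_{z\in K}|f_n(z)-f_m(z)|\le C_K\|f_n-f_m\|_{\mu}$ turns an $L^2$-Cauchy sequence into a locally uniformly Cauchy one, and the reconciliation of the $L^2$-limit with the locally uniform limit via an a.e.-convergent subsequence (using that $\mu>0$ pointwise, so $\mu\,dA$ and $dA$ share null sets) is exactly the point that needs care, and you handle it correctly. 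The only substantive difference is that your write-up is self-contained where the paper defers to the literature.
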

\begin{proof}
Suppose that $\mu$ is admissible. Fix $z\in D$ and let $V_z$ be an open set containing $z$ with $\overline{V_z}\subset D$. Since
\[
\sup_{w\in V_z}|E_w(f)|=\sup_{w\in V_z}|f(w)|<\infty
\]
for every $f\in L^2_H(D,\mu)$, an application of the uniform boundedness principle to the family $\{E_w\,:\,w\in V_z\}$ of continuous linear functionals shows that
\[
\sup_{w\in V_z}\|E_w\|<\infty.
\]
The converse follows from known results (\cite{PW1990} Proposition 2.1).
\end{proof}
Before proceeding we require a lemma. It allows us in many cases to assume the given weight is bounded below by a more well behaved weight.
\begin{lem*}
Let $\mu_1$ be an admissible weight on $D$, and suppose that $\mu_1$ is integrable on a bounded open neighborhood $U$ of some point $z_0\in D$ with $\overline{U}\subset D$. Let $\mu_2$ be a weight on $U$. Then the weight $\tilde{\mu}_1$ defined by
\[\tilde{\mu}_1(z):= \begin{cases} 
\max(\mu_1(z),\mu_2(z))\,& \text{ if }z\in U\\
      \mu_1(z) & \text{ if } z\in D\setminus U \end{cases}
\]
is an admissible weight with $L^2_H(D,\tilde{\mu}_1)\subset L^2_H(D,\mu_1)$ and continuous inclusion, such that $\mu_2\leq \tilde{\mu}_1$ on $U$. Furthermore, if $\mu_2$ is integrable over $U$ as well, then $L^2_H(D,\mu_1)=L^2_H(D,\tilde{\mu}_1)$ and $\tilde{\mu}_1$ determines an equivalent norm to $\mu_1$.
\end{lem*}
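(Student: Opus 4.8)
The plan is to deduce everything from a single pointwise comparison together with the relative compactness of $U$, handling the unconditional assertions first and the ``furthermore'' clause second. The easy observations come for free: on $U$ we have $\tilde\mu_1=\max(\mu_1,\mu_2)\ge\mu_2$, which is the claimed inequality, and since $\max(\mu_1,\mu_2)\ge\mu_1$ on $U$ while $\tilde\mu_1=\mu_1$ on $D\setminus U$, the inequality $\mu_1\le\tilde\mu_1$ holds throughout $D$. Consequently $\|f\|_{\mu_1}\le\|f\|_{\tilde\mu_1}$ for every measurable $f$, so any holomorphic $f$ with $\|f\|_{\tilde\mu_1}<\infty$ lies in $L^2_H(D,\mu_1)$, and the inclusion $L^2_H(D,\tilde\mu_1)\hookrightarrow L^2_H(D,\mu_1)$ has operator norm at most $1$, hence is continuous.

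Next I would establish admissibility of $\tilde\mu_1$ through the Proposition. Writing $\|E_w\|_{\mu_1}$ for the norm of point evaluation on $L^2_H(D,\mu_1)$, for each $w\in D$ and each holomorphic $f$ of finite $\tilde\mu_1$-norm one has
\[
|f(w)|\le\|E_w\|_{\mu_1}\,\|f\|_{\mu_1}\le\|E_w\|_{\mu_1}\,\|f\|_{\tilde\mu_1},
\]
so that $\|E_w\|_{\tilde\mu_1}\le\|E_w\|_{\mu_1}$. Since $\mu_1$ is admissible, the Proposition makes $w\mapsto\|E_w\|_{\mu_1}$ locally bounded; the displayed inequality transfers this local boundedness to $\tilde\mu_1$, and the converse half of the Proposition then yields that $\tilde\mu_1$ is admissible. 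Note that this portion uses neither integrability hypothesis.

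For the ``furthermore'' clause I assume in addition that $\mu_2$ is integrable over $U$. Since $\|f\|_{\mu_1}\le\|f\|_{\tilde\mu_1}$ is already in hand, it remains only to produce a reverse comparison $\|f\|_{\tilde\mu_1}\le C\|f\|_{\mu_1}$ on $L^2_H(D,\mu_1)$. The key step is a uniform sup bound: as $U$ is bounded with $\overline U\subset D$, the closure $\overline U$ is a compact subset of $D$, so local boundedness of $w\mapsto\|E_w\|_{\mu_1}$ furnishes a finite $M:=\sup_{w\in\overline U}\|E_w\|_{\mu_1}$, whence $\sup_{\overline U}|f|\le M\|f\|_{\mu_1}$ for every $f\in L^2_H(D,\mu_1)$. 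I then split the $\tilde\mu_1$-integral over $D\setminus U$ and $U$: the first piece is at most $\|f\|_{\mu_1}^2$, while on $U$ I estimate $\max(\mu_1,\mu_2)\le\mu_1+\mu_2$ and pull out the supremum, so that
\[
\int_U|f|^2\max(\mu_1,\mu_2)\,dA\le\Big(\sup_U|f|^2\Big)\int_U(\mu_1+\mu_2)\,dA .
\]
Here both integrability hypotheses enter, making $\int_U(\mu_1+\mu_2)\,dA$ finite. Combining gives $\|f\|_{\tilde\mu_1}^2\le\bigl(1+M^2\!\int_U(\mu_1+\mu_2)\,dA\bigr)\|f\|_{\mu_1}^2$, the desired reverse bound; together with the earlier inequality this yields norm equivalence and $L^2_H(D,\mu_1)=L^2_H(D,\tilde\mu_1)$.

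I expect the only genuine obstacle to be the sup bound in this last paragraph; the remainder is bookkeeping around $\mu_1\le\tilde\mu_1$. The delicate point is that admissibility supplies merely \emph{local} boundedness of the evaluation norms, so compactness of $\overline U$ must be invoked to upgrade to the uniform constant $M$ over $U$ -- precisely where the hypotheses that $U$ is bounded and $\overline U\subset D$ are used. Once $M$ is available, confining the extra mass contributed by $\mu_2$ (and the $\mu_1$-mass on $U$) to the relatively compact set $U$, where holomorphic $L^2(\mu_1)$-functions are uniformly controlled by their norm, is exactly what converts $L^1$-integrability of the weights into the operator-norm estimate.
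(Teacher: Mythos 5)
Your proposal is correct and follows essentially the same route as the paper: the pointwise inequality $\mu_1\le\tilde\mu_1$ gives the continuous inclusion and transfers the evaluation-norm bounds to establish admissibility, and the reverse norm estimate comes from a uniform sup bound for $|f|$ on $U$ combined with integrability of $\max(\mu_1,\mu_2)$ over $U$. The only cosmetic difference is that you obtain the uniform constant $M$ from local boundedness of $w\mapsto\|E_w\|_{\mu_1}$ plus compactness of $\overline U$, whereas the paper applies the uniform boundedness principle directly to the family $\{E_w:w\in U\}$; both are valid.
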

\begin{proof}
Observe that $\|f\|_{\mu_1}\leq\|f\|_{\tilde{\mu}_1}$ for every measurable function $f$. Therefore $L^2(D,\tilde{\mu}_1)\subset L^2(D,\mu_1)$ with continuous inclusion. It follows that $\|E_{z}\|_{\tilde{\mu}_1}\leq\|E_{z}\|_{\mu_1}$ for every $z\in D$, and hence $\tilde{\mu}_1$ is an admissible weight.

Now suppose that $\mu_2$ is integrable over $U$ as well. By applying the uniform boundedness principle to the family of continuous functionals given by evaluation at each point of $U$, we may find a $C>0$ such that
\begin{align*}
\|f\|_{\tilde{\mu}_1}^2 &= \int_{U}|f(\zeta)|^2\max(\mu_1(\zeta),\mu_2(\zeta))dA(\zeta) +\int_{D\setminus U}|f(\zeta)|^2\mu_1(\zeta) dA(\zeta)\\
&\leq\sup_{z\in U}|f(z)|^2\cdot\int_{U}\max(\mu_1(\zeta),\mu_2(\zeta))dA(\zeta) +\|f\|_{\mu_1}^2\\
&\leq C\cdot\left(\int_{U}\max(\mu_1(\zeta),\mu_2(\zeta))dA(\zeta)\right)\cdot \|f\|_{\mu_1}^2+\|f\|^2_{\mu_1}\\
&\leq \left[C\left(\int_{U}\max(\mu_1(\zeta),\mu_2(\zeta))dA(\zeta)\right)+1\right]\|f\|^2_{\mu_1}
\end{align*}
holds for each $f\in L^2_H(D,\mu_1)$. Observe that $\max(\mu_1,\mu_2)$ is integrable over $U$ since both $\mu_1$ and $\mu_2$ are. 
\end{proof}
By setting $\mu_2\equiv 1$ above, we have the immediate corollary that $\tilde{\mu}_1$ is an equivalent weight to $\mu_1$ with equivalent norm having the property that $1\leq \tilde{\mu}_1$ on $U$.

We also require a weighted generalization of the Ramadanov theorem \cite{PWW2016}, whose statement is included for the convenience of the reader.
\begin{thm}[Weighted generalization of the Ramadanov theorem]\label{Ramadanov}
Let $\{D_i\}_{i=1}^{\infty}$ be a sequence of domains in $\mathbb{C}^N$ and set $D:=\bigcup_{j}D_j$. Let $\mu$ be an admissible weight on $D$, and $\mu_k$ be an admissible weight on $D_k$ for each $k$. Extend $\mu_k$ by $\mu$ on $D$. Assume moreover that
\begin{itemize}
\item[a)] For any $n\in\mathbb{N}$ there is $N=N(n)$, such that $D_n\subset D_m$ and $\mu_n(z)\leq\mu_m(z)\leq\mu(z)$ for $m\geq N(n)$, $z\in D_n$.
\item[b)] $\mu_k\xrightarrow[k\to\infty]{}\mu$ pointwise almost everywhere on $D$.
\end{itemize}
Then
\[
\lim_{k\to\infty}K_{D_k,\mu_k}=K_{D,\mu}
\]
locally uniformly on $D\times D$.
\end{thm}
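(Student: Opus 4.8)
The plan is to reduce the statement to convergence on the diagonal $z=\zeta$ and then recover the full kernel by a normal families and polarization argument. The essential tool is the extremal characterization
\[
K_{D,\mu}(z,z)=\sup\bigl\{|f(z)|^2 : f\in L^2_H(D,\mu),\ \|f\|_\mu\le 1\bigr\},
\]
valid for any admissible weight, which follows from the reproducing property together with the Cauchy--Schwarz inequality, the supremum being attained at the normalized kernel $K_{D,\mu}(\,\cdot\,,z)/\sqrt{K_{D,\mu}(z,z)}$ when $K_{D,\mu}(z,z)>0$. Using this I would first extract two monotonicity facts from hypothesis (a). Fixing $z\in D_n$ and taking $m\ge N(n)$, so that $D_n\subset D_m$ and $\mu_n\le\mu_m\le\mu$ on $D_n$, restricting any admissible competitor from the larger domain and heavier weight to the smaller domain and lighter weight only decreases its norm; comparing the corresponding suprema yields
\[
K_{D,\mu}(z,z)\le K_{D_m,\mu_m}(z,z)\le K_{D_n,\mu_n}(z,z).
\]
Hence for each fixed $z$ the sequence $k\mapsto K_{D_k,\mu_k}(z,z)$ is eventually nonincreasing and bounded below, so it converges to some $L(z)\ge K_{D,\mu}(z,z)$.

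The heart of the proof is the reverse inequality $L(z)\le K_{D,\mu}(z,z)$, and this is the one place hypothesis (b) is genuinely needed. If $K_{D_k,\mu_k}(z,z)=0$ for some $k$, then by the display above $K_{D,\mu}(z,z)=0=L(z)$ and there is nothing to prove, so assume all diagonal values are positive and set $g_k:=K_{D_k,\mu_k}(\,\cdot\,,z)/\sqrt{K_{D_k,\mu_k}(z,z)}$, the extremal unit vectors, which satisfy $\|g_k\|_{\mu_k}=1$ and $g_k(z)=\sqrt{K_{D_k,\mu_k}(z,z)}\to\sqrt{L(z)}$. The diagonal bounds give $|g_k(w)|^2\le K_{D_k,\mu_k}(w,w)\le K_{D_n,\mu_n}(w,w)$ for $w$ in a fixed $D_n$ and $k$ large, so the $g_k$ are locally uniformly bounded and Montel's theorem produces a subsequence converging locally uniformly on $D$ to a holomorphic $g$ with $g(z)=\sqrt{L(z)}$. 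Since $g_k\to g$ locally uniformly while $\mu_k\to\mu$ almost everywhere, the integrands $|g_k|^2\mu_k\mathbf 1_{D_k}$ converge to $|g|^2\mu$ almost everywhere, and Fatou's lemma gives
\[
\int_D|g|^2\mu\,dA\le\liminf_{k\to\infty}\int_{D_k}|g_k|^2\mu_k\,dA=1.
\]
Thus $g$ is a competitor in the extremal problem for $(D,\mu)$, whence $L(z)=|g(z)|^2\le K_{D,\mu}(z,z)$. This establishes diagonal convergence $K_{D_k,\mu_k}(z,z)\to K_{D,\mu}(z,z)$ for every $z\in D$.

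Finally I would upgrade diagonal convergence to locally uniform convergence on $D\times D$. From the inequality $|K_{D_k,\mu_k}(z,\zeta)|^2\le K_{D_k,\mu_k}(z,z)\,K_{D_k,\mu_k}(\zeta,\zeta)$ and the now-convergent (hence locally bounded) diagonals, the family $\{K_{D_k,\mu_k}\}$ is locally uniformly bounded on $D\times D$, viewing each as holomorphic in $z$ and in $\bar\zeta$, and is therefore normal. Any locally uniform limit of a subsequence is holomorphic in $z$, conjugate-holomorphic in $\zeta$, and agrees with $K_{D,\mu}$ on the diagonal, so by polarization---a function holomorphic in $z$ and $\bar\zeta$ is determined by its restriction to $z=\zeta$---every such limit coincides with $K_{D,\mu}$. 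Since every subsequential limit is the same, the whole sequence converges locally uniformly to $K_{D,\mu}$.

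I expect the main obstacle to be the Fatou step, where locally uniform convergence of the extremal functions must be married both to the merely almost-everywhere convergence of the weights and to the fact that the competitors $g_k$ live on the varying domains $D_k$ rather than on $D$; the device that makes this work is that every compact subset of $D$ is eventually contained in $D_k$, so the indicators $\mathbf 1_{D_k}$ disappear in the limit. A minor secondary point is the bookkeeping around possibly vanishing diagonal values, which the monotonicity handles cleanly as indicated above.
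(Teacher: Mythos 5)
The paper does not actually prove this theorem: it is imported from the cited reference \cite{PWW2016} and stated ``for the convenience of the reader,'' so there is no in-paper proof to compare yours against. Judged on its own, your argument is the classical Ramadanov scheme adapted to weights---extremal characterization of the diagonal, monotonicity from hypothesis (a), Fatou plus Montel for the reverse inequality, then normal families and polarization off the diagonal---and it is essentially correct; in particular the Fatou step, which you rightly identify as the crux, is handled properly, since every point of $D$ lies in $D_k$ for all large $k$ and hypothesis (b) supplies almost-everywhere convergence of the integrands.

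Two small points deserve tightening. First, the diagonal sequence $k\mapsto K_{D_k,\mu_k}(z,z)$ is not literally ``eventually nonincreasing'': hypothesis (a) only yields $K_{D_m,\mu_m}(z,z)\le K_{D_n,\mu_n}(z,z)$ for $m\ge N(n)$, and $N$ need not respect consecutive indices. The sequence is decreasing along a directed order, which together with the lower bound $K_{D,\mu}(z,z)$ still forces convergence (any term within $\varepsilon$ of the limit inferior dominates all sufficiently late terms, so the limit superior cannot exceed the limit inferior), but the one-line justification should be this directedness argument rather than monotonicity. Second, Montel's theorem is being applied to functions $g_k$ defined on the varying domains $D_k$; one should extract the subsequence by a diagonal argument over a compact exhaustion of $D$, each piece of which is eventually contained in $D_k$, and the locally uniform bound $|g_k(w)|^2\le K_{D_n,\mu_n}(w,w)$ relies on the local boundedness of $w\mapsto K_{D_n,\mu_n}(w,w)$, which is precisely the admissibility of $\mu_n$ as characterized in the paper's Proposition. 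Neither point is a gap, only missing bookkeeping.
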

We now have all the necessary tools to prove the main result of this section.
\begin{thm}\label{Main}
Let $\mu$ be an admissible weight on a domain $D\subset\mathbb{C}^N$. Then there exists an admissible weight $\mu^*$, with $\mu\sim\mu^*$, such that $K_{D,\mu^*}$ has zeroes.
\end{thm}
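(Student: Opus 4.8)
The plan is to localize the model argument from the introduction to a small neighborhood of a suitably chosen point, using the Lemma to repair the two ways in which a general measurable $\mu$ can misbehave: failure of local integrability and failure to be bounded below. First I would dispose of the cases in which $K_{D,\mu}$ already vanishes somewhere (this includes the degenerate case $L^2_H(D,\mu)=\{0\}$, where $K_{D,\mu}\equiv 0$); in either situation $\mu^*=\mu$ works. Assuming neither, fix a nonzero $f\in L^2_H(D,\mu)$ and a point $z_0$ with $f(z_0)\neq 0$. By continuity $|f|\ge c>0$ on a bounded open neighborhood $U$ of $z_0$ with $\overline U\subset D$, whence
\[
\int_U \mu \;\le\; c^{-2}\int_U |f|^2\mu \;\le\; c^{-2}\|f\|_\mu^2<\infty,
\]
so $\mu$ is integrable on $U$ regardless of how pathological it is elsewhere (note that $D$ need not be bounded or contain the origin, since everything now happens inside $U$). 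Replacing $\mu$ by $\max(\mu,1)$ on $U$ — an equivalent admissible weight by the corollary to the Lemma — I may assume in addition that $\mu\ge 1$ on $U$.

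Next I would build the approximating and limiting weights centred at $z_0$. For $n\in\mathbb N$ set
\[
\nu_n := \begin{cases}\max\bigl(\mu,\ \min(n,\|z-z_0\|^{-2N})\bigr)&z\in U,\\ \mu & z\in D\setminus U,\end{cases}
\qquad
\nu := \begin{cases}\max\bigl(\mu,\ \|z-z_0\|^{-2N}\bigr)&z\in U,\\ \mu & z\in D\setminus U.\end{cases}
\]
Because $\mu\ge 1$ on $U$ one checks $\mu\le\nu_n\le(1+n)\mu$, so each $\nu_n$ is an equivalent admissible weight — this is exactly where boundedness below of $\mu$ is used — while the Lemma applied with $\mu_2=\|z-z_0\|^{-2N}$ shows $\nu$ is admissible as well. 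The $\nu_n$ increase pointwise to $\nu$ with $\nu_n\le\nu$, so Theorem~\ref{Ramadanov} (taking every $D_k=D$) yields $K_{D,\nu_n}\to K_{D,\nu}$ locally uniformly on $D\times D$. Since $\nu\ge\|z-z_0\|^{-2N}$ near $z_0$ and this singularity is non-integrable in $\mathbb C^N=\mathbb R^{2N}$, every element of $L^2_H(D,\nu)$ — in particular each $K_{D,\nu}(\,\cdot\,,\zeta)=B^{D,\nu}_\zeta$ — must vanish at $z_0$.

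It remains to run a Hurwitz/Rouch\'e argument, for which I need $K_{D,\nu}(\,\cdot\,,\zeta)\not\equiv 0$ for some $\zeta$, equivalently $L^2_H(D,\nu)\neq\{0\}$. This is the main obstacle: the singular weight $\nu$ annihilates every holomorphic function that does not vanish at $z_0$, so I must exhibit a nonzero $g$ that vanishes at $z_0$ yet remains square-integrable against $\nu$. When $\dim L^2_H(D,\mu)\ge 2$ the functional $E_{z_0}$ has nontrivial kernel, producing such a $g\in L^2_H(D,\mu)$; a first-order zero at $z_0$ makes $|g|^2\|z-z_0\|^{-2N}=O(\|z-z_0\|^{2-2N})$ locally integrable, and away from $z_0$ the weight $\nu$ is comparable to $\mu$, so $g\in L^2_H(D,\nu)$. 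Granting this, pick $\zeta$ with $K_{D,\nu}(\,\cdot\,,\zeta)\not\equiv 0$ and restrict to a complex line through $z_0$ along which this function is not identically zero; it is then a one-variable holomorphic function with an isolated zero at $z_0$, and locally uniform convergence together with Rouch\'e's theorem forces $K_{D,\nu_n}(\,\cdot\,,\zeta)$ to have a zero near $z_0$ for all large $n$. Taking $\mu^*=\nu_n$ for such an $n$ finishes the argument. The two steps I expect to absorb the work are the production of the vanishing function $g$ (together with the low-dimensional bookkeeping, which is precisely why the cases where $K_{D,\mu}$ already has a zero are separated off at the start) and the passage from the tame local weight on $U$ back to the arbitrary global weight without losing admissibility — both of which the Lemma is engineered to handle.
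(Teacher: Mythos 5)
Your strategy is the paper's: force a non-integrable singularity of order $\|z-z_0\|^{-2N}$ into the weight so that every function in the new space vanishes at $z_0$, approximate by the truncations at level $n$, which are equivalent weights, and transfer the zero back via Theorem~\ref{Ramadanov} plus a Hurwitz/Rouch\'e argument on a complex line. Your localization is if anything cleaner than the paper's: you locate the neighborhood of integrability by evaluating a nonvanishing $f$ rather than by a case split at the origin, you modify the weight only on $U$ (so the behavior of the singular factor far from $z_0$ never enters), and your verifications that $\nu_n\sim\mu$, that $\nu$ is admissible, and that the hypotheses of Theorem~\ref{Ramadanov} hold are all sound.

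The one place you genuinely diverge is the nontriviality of $L^2_H(D,\nu)$, and that is exactly where the gap sits. The paper asserts that $z^\alpha f\in L^2_H(D,\nu)$ for $|\alpha|=N$; you instead take a nonzero $g\in\ker E_{z_0}$, which requires $\dim L^2_H(D,\mu)\ge 2$. Your opening reduction (dispose of the case where $K_{D,\mu}$ already vanishes somewhere) does \emph{not} absorb the leftover case: $L^2_H(D,\mu)$ can be one-dimensional and spanned by a zero-free function\textemdash take $D=\mathbb{C}$ and $\mu(z)=(1+|z|^2)^{-2}$, for which $L^2_H(D,\mu)$ consists precisely of the constants\textemdash and then $K_{D,\mu}$ is a nonzero constant, so neither branch of your argument applies. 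Moreover you cannot close this gap, because in that case the theorem is false: any equivalent admissible weight reproduces the same one-dimensional space, so its kernel is $f_0(z)\overline{f_0(w)}/\|f_0\|^2$, which is zero-free. (The paper's own proof silently breaks at the same spot: on an unbounded domain $z^\alpha f$ need not be square-integrable against $\nu$\textemdash in the example above $\int_{\mathbb{C}}|z|^2\max(1,|z|^{-2})\mu\,dA$ diverges\textemdash while on bounded domains the issue never arises, since there $L^2_H(D,\mu)$ is a module over the polynomials and hence is either $\{0\}$ or infinite-dimensional.) In short: your argument is correct and in fact establishes the conclusion in every case in which it holds, but as a proof of the theorem as stated it has an unfillable hole in the one-dimensional zero-free case, which you flagged as ``low-dimensional bookkeeping'' without actually resolving it.
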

\begin{proof} We assume that $L_H^2(D,\mu)\neq \{0\}$, otherwise the Bergman kernel vanishes identically. By translating if necessary, we may assume that $0\in D$. If $\mu$ is not integrable in any neighborhood of $z=0$, then every $f\in L^2_H(D,\mu)$ must satisfy $f(0)=0$; in particular this implies that $K_{\mu}(0,\zeta)=0$ for each $\zeta\in D$. Therefore we may assume that $\mu$ is integrable on some neighborhood $U$ of $z=0$. By shrinking $U$ if necessary, we may assume that $U$ is bounded with $\overline{U}\subset D$. The Lemma now allows us to assume that $1\leq \mu$ on $U$.

Set $g(z)=\max \left(1,1/\|z\|^{2N}\right)$ and consider
\[
\nu(z)=g(z)\mu(z).
\]
Since $1\leq g(z)$ everywhere, we have $\|f\|^2_{\mu}\leq\|f\|^2_{\nu}$. This shows that the inclusion $L^2(D,\nu)\subset L^2(D,\mu)$ is continuous, implying that $\nu$ is an admissible weight (as in the proof of the Lemma). Since we assume that $1\leq\mu(z)$ on $U$, $\nu$ is not integrable in any neighborhood of $z=0$ and hence $K_{D,\nu}(\,\cdot\,,w)=0$ for each $w\in D$.

The function $\min(n,g(z))$ is bounded above and uniformly bounded away from zero on $D$, so
\[
\nu_n(z)=\min\left(n,g(z)\right)\cdot \mu(z)
\]
is equivalent to $\mu$ for each $n\in\mathbb{N}$.

Next, we apply Theorem \ref{Ramadanov}.
Since $L^2_H(D,\nu)\neq \{0\}$ (e.g. $z^{\alpha}f\in L^2_H(D,\nu)$ whenever $f\in L^2_H(D,\mu)$ and $\alpha$ is a multindex with $|\alpha|=N$), we may find a $w\in D$ so that $K_{\nu}(z,w)$ is a nontrivial holomorphic function of $z$. We claim that $K_{\nu_{m}}(z,w)$ has zeros for some $m\in\mathbb{N}$. Seeking a contradiction, suppose that $K_{\nu_{n}}(z,w)$ has no zeros for every $n\in\mathbb{N}$. We have chosen $w\in D$ so that $K_{\nu}(z,w)$ is not identically zero, so fix $z_0\in D$ with $K_{\nu}(z_0,w)\neq 0$. Applying Hurwitz's theorem of one complex variable to connected component of $\{\lambda z_0\in D\,:\,\lambda\in\mathbb{C}\}$ containing the origin, we see that $K_{\nu}(\lambda z_0,w)$ has no zeros. However this implies that $K_{\nu}(0,w)\neq 0$, a contradiction to what was shown above: every $f\in L^2_H(D,\nu)$ vanishes at zero. This shows the claim, and setting $\mu^*=\nu_{m}$ completes the proof.
\end{proof}
\textbf{Remark.} Observe that something slightly stronger than the conclusion of the theorem has been shown: that we may\textemdash up to any positive error\textemdash actually prescribe the point at which the zero occurs. Furthermore, by carrying out this construction at finitely many points simultaneously, one can show that an equivalent weight exists whose Bergman kernel has zeroes at finitely many predetermined points up to any positive error.
\section{Radial Weights with Kernel Having Infinitely Many Zeroes in The Plane}
It is known \cite{Perälä2017} that the kernel $B_{\mathbb{D},\mu}(\,\cdot\,,w)$ of an integrable radial weight $\mu$ on the unit disk $\mathbb{D}\subset\mathbb{C}$ cannot have infinitely many zeroes for a fixed $w\in\mathbb{D}$. In this section we show that the analogous result fails when $\mathbb{D}$ is replaced with the complex plane. In fact, we exhibit a family of radial weights $\mathcal{W}\subset L^1(\mathbb{C})$ such that for every $W\in\mathcal{W}$, the associated weighted Bergman kernel $B_{\mathbb{C},W}(\,\cdot\,,w)$ has infinitely many zeroes for each fixed nonzero $w$ in the plane. This is achieved by following a similar construction to that of H. Bommier-Hato, M. Engli\v{s}, and E.-H. Youssfi \cite{BHEEH}.

Given two real and positive parameters $\beta,\gamma$, we may define a holomorphic function $E_{\beta,\gamma}(z)$ by the power series
\begin{equation}
E_{\beta,\gamma}:=\sum_{k=0}^{\infty}\frac{z^k}{\Gamma(\beta k+\gamma)}.
\end{equation}
This is known as the Mittag-Leffler function associated to $\beta$ and $\gamma$. $E_{\beta,\gamma}$ is an entire function with order $1/\beta$ and type 1. A comprehensive treatise on the theory of Mittag-Leffler functions is \textit{Mittag-Leffler functions, Related Topics and Applications} \cite{GAM2014}.
\begin{thm}
Let $\mathcal{W}\subset L^1(\mathbb{C})$ be the family of weights of the form
\[
W(z)=\frac{1}{2\pi}|z|^n\exp(-\alpha|z|^{2m}),
\]
where $n\in (-2,\infty)$, $\alpha,m>0$, and $m\not\in\mathbb{Z}$. Every member $W$ of $\mathcal{W}$ is admissible and induces a kernel $K_{\mathbb{C},W}(\,\cdot\,,w)$ having infinitely many zeroes for each nonzero $w$ in the plane.
\end{thm}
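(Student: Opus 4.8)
The plan is to use the radial symmetry of $W$ to diagonalize the Bergman space in the monomial basis, obtain a closed form for the kernel in terms of a Mittag-Leffler function, and then read off infinitely many zeros from the fact that this function has finite but non-integer order.

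First I would record the normalizing constants. Because $W$ is radial, distinct monomials are orthogonal in $L^2(\mathbb{C},W)$, and passing to polar coordinates reduces the norm of $z^k$ to a single Gamma-integral; a standard computation gives
\[
c_k := \|z^k\|_W^2 = \frac{1}{2m}\,\alpha^{-\frac{2k+n+2}{2m}}\,\Gamma\!\left(\frac{2k+n+2}{2m}\right),\qquad k=0,1,2,\dots
\]
The hypothesis $n>-2$ guarantees that the argument of $\Gamma$ is positive for every $k$, so each $c_k$ is finite and strictly positive (this is also exactly what makes $W$ integrable near the origin). The super-exponential growth of $\Gamma$ in turn forces $\sum_k |z|^{2k}/c_k$ to converge locally uniformly on $\mathbb{C}$.

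Next I would establish admissibility together with the completeness of the monomials. Any $f\in L^2_H(\mathbb{C},W)$ is entire, say $f(z)=\sum_k a_k z^k$; extracting the $k$-th angular Fourier coefficient on each circle shows $\langle f,z^k\rangle_W=a_k c_k$, whence Bessel's inequality yields $\sum_k |a_k|^2 c_k\le\|f\|_W^2$. Cauchy--Schwarz then gives the pointwise bound $|f(z)|\le \|f\|_W\bigl(\sum_k |z|^{2k}/c_k\bigr)^{1/2}$, which is locally bounded in $z$; by the Proposition the evaluation functionals are continuous, and the same bound shows that an $L^2$-Cauchy sequence of holomorphic functions converges locally uniformly to a holomorphic limit, so $L^2_H(\mathbb{C},W)$ is closed and $W$ is admissible. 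If $g\perp z^k$ for all $k$ then $a_k c_k=0$ forces $g\equiv 0$, so $\{z^k/\sqrt{c_k}\}$ is a complete orthonormal system, and property (ii) of the kernel gives
\[
K_{\mathbb{C},W}(z,w)=\sum_{k=0}^{\infty}\frac{(z\overline{w})^k}{c_k}.
\]

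Substituting the formula for $c_k$ and factoring out the $k$-independent constant $\alpha^{(n+2)/(2m)}$, the remaining series is exactly a Mittag-Leffler function with $\beta=1/m$ and $\gamma=(n+2)/(2m)$:
\[
K_{\mathbb{C},W}(z,w)=2m\,\alpha^{\frac{n+2}{2m}}\,E_{1/m,\,\frac{n+2}{2m}}\!\bigl(\alpha^{1/m}z\overline{w}\bigr).
\]
Fixing $w\ne 0$ and regarding this as a function of $z$, it is a nonzero constant times $E_{1/m,(n+2)/(2m)}$ precomposed with the nonzero dilation $z\mapsto\alpha^{1/m}\overline{w}\,z$, hence entire of order $1/\beta=m$. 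Since $m>0$ and $m\notin\mathbb{Z}$, this order is finite, positive, and non-integer. By the Hadamard factorization theorem an entire function with only finitely many zeros has the form (polynomial)$\cdot\,e^{(\text{polynomial})}$ and therefore has nonnegative integer order, so $K_{\mathbb{C},W}(\,\cdot\,,w)$ cannot have finitely many zeros; as the dilation only rescales the zero set, the conclusion transfers from $E_{1/m,(n+2)/(2m)}$ to $K_{\mathbb{C},W}(\,\cdot\,,w)$. The principal obstacle is not this final order argument, which is immediate, but the middle step of rigorously establishing admissibility and completeness of the monomial system for a weight that may blow up or vanish at the origin; once the closed form for the kernel is in hand, the existence of infinitely many zeros follows with no further work, and the hypothesis $m\notin\mathbb{Z}$ enters decisively only through the non-integrality of the order.
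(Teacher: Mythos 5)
Your proof is correct and follows the same main line as the paper: diagonalize in the monomial basis using radiality, compute the moments $c_k$ as Gamma-integrals, recognize the kernel as a Mittag-Leffler function $E_{1/m,(n+2)/(2m)}$ of order $m$, and conclude via Hadamard factorization that a non-integer order forces infinitely many zeros. The one place you genuinely diverge is the admissibility step. The paper disposes of it by citing a criterion of Pasternak-Winiarski (local integrability of $W^{-c}$ for some $c>0$ implies admissibility) and then simply asserts that the monomials form an orthonormal basis; you instead give a self-contained argument, using the angular Fourier expansion and Bessel's inequality to get $\sum_k|a_k|^2c_k\le\|f\|_W^2$ and then the pointwise bound $|f(z)|\le\|f\|_W\bigl(\sum_k|z|^{2k}/c_k\bigr)^{1/2}$, which simultaneously yields continuity of point evaluations, closedness of $L^2_H(\mathbb{C},W)$, and completeness of the monomial system. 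Your route is more elementary and makes explicit a completeness claim the paper leaves implicit, at the cost of a slightly longer argument; the paper's citation is shorter but imports an external result. (Your exponent of $\alpha$ in $c_k$, namely $-\tfrac{2k+n+2}{2m}$, is the consistent one; in any case the constant prefactor is immaterial to the order computation.) Both proofs then invoke the same standard facts that $E_{\beta,\gamma}$ has order $1/\beta$ and that an entire function of finite non-integer order has infinitely many zeros.
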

\begin{proof}
Fix $W\in\mathcal{W}$. We first show that $W$ is admissible. By a result of Z. Pasternak-Winiarski (\cite{PW1990(2)} Corollary 3.1), it suffices to show that there exists a $c>0$ such that $W^{-c}$ is locally integrable; setting $c=1/n$ if $n>0$ and $c=1$ otherwise will work. Since $W$ is radial, the monomials are an orthonormal basis for $L_H^2(D,W)$, and the representation
\begin{equation}
B_{W}(z,w)=\sum_{k=0}^{\infty}\frac{1}{W_k}(z\bar{w})^k,
\end{equation}
with $W_k=2\pi\int_{0}^{\infty} r^{2k+1}W(r)dr$, holds. Now
\begin{equation}
W_k=\int_{0}^{\infty}r^{2k+1+n}\exp(-\alpha r^{2m})=\frac{1}{2m}\alpha^{\tfrac{2m-2k-n-3}{2m}}\cdot\Gamma\left(\frac{2k+2+n}{2m}\right).
\end{equation}
Note that $W_1=\|W\|_{L^1}<\infty$, so $W\in L^1(\mathbb{C})$ and hence $\mathcal{W}$ is well defined. Comparing (2) with (3) yields
\begin{align}
B_{W}(z,w)&=2m\alpha^{\tfrac{3-2m+n}{2m}}\sum_{k=0}^{\infty}\alpha^{k/m}\frac{(z\bar{w})^k}{ \Gamma\left(\frac{2k+2+n}{2m}\right)}\\\nonumber &=2m\alpha^{\tfrac{3-2m+n}{2m}}\sum_{k=0}^{\infty}\frac{\big(\alpha^{1/m}(z\bar{w})\big)^k}{ \Gamma\left(\tfrac{k}{m}+\tfrac{2+n}{2m}\right)}.
\end{align}
We may write this in terms of the Mittag-Leffler function (1) as
\[
B_{W}(z,w)=2m\alpha^{\tfrac{3-2m+n}{2m}}E_{\tfrac{1}{m},\tfrac{2+n}{2m}}\big(\alpha^{1/m}(z\bar{w})\big).
\]
Fix a nonzero $w$ in the plane. It follows from (4) that $B_{W}(z,w)$ is an entire function of order $m$. Since $m\not\in\mathbb{Z}$ by construction, it is a  consequence of the Hadamard factorization theorem that the kernel $B_{W}(\,\cdot\,, w)$ has infinitely many zeroes (\cite{C1978} Theorem XI.3.7).
\end{proof}
Observe that setting $m=1/2$, $\alpha=1$, and $n=-1$ in (4) shows
\[
B_{W}(z,w)=\sum_{k=0}^{\infty}\frac{(z\bar{w})^k}{\Gamma(2k+1)}=\sum_{k=0}^{\infty}\frac{(z\bar{w})^k}{(2n)!}=\cos(i\sqrt{z\bar{w}}),
\]
which provides a concrete member of $\mathcal{W}$ that can be shown to satisfy the conclusion of Theorem 3 without having to invoke Hadamard's theorem.\medbreak

\textbf{Remark.} The construction of this family of examples required solving a Stieltjes moment problem whose solution is absolutely continuous with respect to Lebesgue measure. There has been much work done on solving the Stieltjes moment problem \cite{D1989, ST1943}, so it would be interesting to see if one could characterize the entire functions $f$ for which there corresponds an admissible weight $\mu_f$ on $\mathbb{C}$ with $B_{\mathbb{C},\mu_f}(z,w)=f(z\bar{w})$. For instance, it is clear that a necessary condition on such a function $f$ is that its Maclaurin series coefficients be all real and positive.
\bibliography{VBK}
\bibliographystyle{plain}
\end{document}